\newcommand{\p}[1]{{\mathbb{P}^{#1}}}
\newcommand{\calf}{{\mathcal F}}
\newcommand{\caln}{{\mathcal N}}
\newcommand{\C}{\mathbb{C}}
\newtheorem{thm}{Theorem}
\newtheorem{lem}[thm]{Lemma}
\newtheorem{pps}[thm]{Proposition}
\newtheorem{dfn}[thm]{Definition}
\title{Holomorphic pre-symplectic form on the nested Hilbert scheme ${\rm Hilb}^{3,4}(\C^2)$  }
\author{Rodrigo A. von Flach}
\address{Centro de Ci\^encias Exatas e Tecnol\'ogicas \\
Universidade Federal do Rec\^oncavo da Bahia \\
Rua Rui Barbosa, 710 \\
44380-000 Cruz das Almas, BA, Brazil}
\email{rodrigovonflach@ufrb.edu.br}
\author{Newiton Braga Neto}
\address{Centro de Ci\^encias Exatas e Tecnol\'ogicas \\
	Universidade Federal do Rec\^oncavo da Bahia \\
	Rua Rui Barbosa, 710 \\
	44380-000 Cruz das Almas, BA, Brazil}
\email{newitonbraga@gmail.com}
\begin{document}
 
\begin{abstract}
We regard the pre-hyperk\"ahler structure on the moduli space of framed flags of sheaves $\mathcal{F}(1,3,1)$ on the projective plane $\mathbb{P}^2$ via an adaptation of the ADHM construction of framed sheaves. Then, we study and categorize the degenerate points of the holomorphic pre-symplectic form presented in the moduli space.

\noindent{\bf Keywords:} framed sheaves, ADHM construction, moduli spaces.
\end{abstract}

\maketitle

\tableofcontents

\section{Introduction}

Moduli spaces of flags of sheaves has been playing an important role since the work of Grojnowski \cite{G} and Nakajima \cite{nakajima}. In 2000, Baranovsky \cite{B} constructed an action of the Heisenberg algebra in the cohomology of moduli spaces of sheaves on surfaces giving a higher rank generalization. A few years later, Bruzzo et al. \cite{A01-01} present flags of sheaves as a tool to study supersymmetric quantum mechanical model in string theory. Flags of sheaves also appeared in the work of Chuang et al. \cite{CDDT} in order to give a string theoretic derivation for the conjecture of Hausel, Letellier and Rodriguez-Villegas on the cohomology of character varieties with marked points. More recently, von Flach and Jardim \cite{vFJ} presented a detailed account of the ADHM construction of the moduli space of framed flags of sheaves on the projective plane. 

From a geometric point of view, for specific topological invariants, this moduli space has a holomorphic structure defined as holomorphic pre-symplectic form. More precisely, let $(E,F,\varphi)$ be triples consisting of a torsion free sheaf $F$ on $\p2$, a framing $\varphi$ of $F$ at a line $\ell_\infty$ and a subsheaf $E$ of $F$ such that the quotient $F/E$ is supported away from the framing line $\ell_\infty$. We denote by $\calf(r,n,l)$ the moduli space of such triples where $r:={\rm rk}(E)={\rm rk}(F)$, $n:=c_2(E)$, and $l:=h^0(F/E)$ are fixed invariants.  

It was proved in \cite{vFJ} that $\calf(r,n,1)$ is an irreducible, nonsingular quasi-projective variety of dimension $2rn+r+1$ by using the same techniques as in \cite[Section 3]{A01-01}. Note also that $\calf(1,n,l)$ coincides with the nested Hilbert scheme ${\rm Hilb}^{n,n+l}(\C^2)$ of points in $\C^2$. We are particularly interested in the case $l=1$ because $\calf(1,n,1) =  {\rm Hilb}^{n,n+1}(\C^2)$, which is known to be smooth. Furthermore in \cite{vFJ} it is proved that $\calf(1,n,1)$ admits the structure of a holomorphic pre-symplectic manifold, that is, $\calf(1,n,1)$ is a K\"ahler manifold equipped with a natural closed holomorphic 2-form $\Omega$. In the simplest possible case, namely $n=1$, $\Omega$ is generically non-degenerate.

In this paper we study the degenerate points of $\Omega$ on $\calf(1,3,1)$ continuing the study initiated in \cite{vFJ}[Section 7.3]. These points were there characterized by analyzing the matrices that describe the enhanced ADHM variety $\caln(1,2,1)$ associated with $\calf(1,2,1)$. In other words, it was proved in \cite{vFJ} that the moduli space of framed stable representations of the \emph{enhanced	ADHM quiver}
\begin{equation}
\label{enhanced-ADHM-quiver}
\begin{tikzpicture}[->,>=stealth',shorten >=1pt,auto,node distance=2.5cm,
semithick]
\tikzstyle{every state}=[fill=white,draw=none,text=black]

\node[state]    (A)                {$e_1$};
\node[state]    (B) [left of=A]    {$e_2$};
\node[state]    (C) [right of=A]   {$e_{\infty}$};

\path (A) edge [loop above]          node {$\alpha$}     (A)
edge [loop below]          node {$\beta$}      (A)
edge [out=20,in=160]       node {$\eta$}       (C)
edge [out=-160,in=-20]     node {$\gamma$}     (B)
(B) edge [loop above]          node {$\alpha'$}    (B)
edge [loop below]          node {$\beta'$}     (B)
edge [out=20,in=160]       node {$\phi$}       (A)
(C) edge [out=-160,in=-20]     node {$\xi$}        (A);
\end{tikzpicture}
\end{equation}
with the relations
\begin{equation}
\begin{array}{rccc}
\label{eq:enhADHMrel02}
\alpha'\beta'-\beta'\alpha'& \alpha\beta - \beta\alpha + \xi\eta, & \alpha\phi-\phi\alpha', & \beta\phi-\phi\beta',\\
\eta\phi, & \begin{array}{lr}
\gamma\xi, & \phi\gamma, \end{array} & \gamma\alpha-\alpha'\gamma, & \gamma\beta - \beta'\gamma,
\end{array}
\end{equation}
and dimension vector $(r,c,c')$ is isomorphic to $\calf(r,c-c',c')$. All properties of the former space are obtained by analyzing the moduli space of stable quiver representations.

%As it is checked in Section \ref{e-adhm-q}, the moduli space of stable representations of the enhanced ADHM quiver coincide with the \emph{Hecke correspondences} for the one-loop quiver, as introduced by Nakajima in \cite[Section 5]{N1} and \cite[Section 8]{N2}. Although the smoothness and irreducibilibity of Hecke correspondences for $l=1$ are well known to specialists, it is hard to find a detailed account. In particular, \cite[Theorem 8.3]{N2} only provides a set-theoretical bijection between the Hecke correspondences and $\calf(r,n,l)$, while we prove that these varieties are isomorphic as schemes.

The paper is outlined as follows. In Section \ref{sec.flags} we present briefly how \cite{vFJ} proved that the enhanced ADHM variety $\mathcal{N}(r,n+l,l)$ and the moduli space of flags of sheaves on $\mathbb{R}$ $\mathcal{F}(r,n,l)$ are isomporphics. The construction of holomorphic pre-symplectic structure on $\mathcal{N}(1,3,1)$ and the study of its degenerate points was performed in Section \ref{geometry}.

%%%%%%%%%%%%%%%%%%%%%%%%%%%%%%%%%%%%%%%%%%%%%%%%%%%%%%%%%%%%%%
%%%%%%%%%%%% framed flags as quiver varieties %%%%%%%%%%%%%%%%
%%%%%%%%%%%%%%%%%%%%%%%%%%%%%%%%%%%%%%%%%%%%%%%%%%%%%%%%%%%%%%

\section{Framed flags of sheaves on $\p2$ as enhanced ADHM varieties} \label{sec.flags}

Fix a line $\ell_\infty\subset\p2$; recall that a \emph{framing} of a coherent sheaf $F$ on $\p2$ at the line $\ell_\infty$ is the choice of an isomorphism $\varphi:F|_{\ell\infty}\to{\mathcal O}_{\ell\infty}^{\oplus r}$, where $r$ is the rank of $F$.
A \emph{framed flag of sheaves} on $\p2$ is a triple $(E,F,\varphi)$ consisting of a torsion free sheaf $F$ on $\p2$, a framing $\varphi$ of $F$ at the line $\ell_\infty$, and a subsheaf $E$ of $F$ such that the quotient $F/E$ is supported away from the framing line $\ell_\infty$. Note that the existence of a framing forces $c_1(F)=0$, while the last condition implies that $c_1(E)=0$, and that $F/E$ must be a 0-dimensional sheaf. Thus the triple $(E,F,\varphi)$ has three numerical invariants: $r:={\rm rk}(E)={\rm rk}(F)$, $n:=c_2(F)$ and $l:=h^0(F/E)$; note that $c_2(E)=n+l$. 

It was proved in \cite{vFJ}[Theorem 18] that the moduli space of flags of sheaves $\calf(r,n,l)$ is isomorphic to the moduli space of stable representations of the enhanced ADHM quiver $\caln(r,n+l,l)$, also called enhanced ADHM variety, defined as follows. Consider the following quiver as the \textit{enhanced ADHM quiver}\index{Enhanced ADHM quiver}
\begin{equation*}\label{eq:quiverADHMaument-est}
\begin{tikzpicture}[->,>=stealth',shorten >=1pt,auto,node distance=2.5cm,
semithick]
\tikzstyle{every state}=[fill=white,draw=none,text=black]

\node[state]    (A)                {$e_1$};
\node[state]    (B) [left of=A]    {$e_2$};
\node[state]    (C) [right of=A]   {$e_{\infty}$};

\path (A) edge [loop above]          node {$\alpha$}     (A)
edge [loop below]          node {$\beta$}      (A)
edge [out=20,in=160]       node {$\eta$}       (C)
%	edge [out=-160,in=-20]     node {$\gamma$}     (B)
(B) edge [loop above]          node {$\alpha'$}    (B)
edge [loop below]          node {$\beta'$}     (B)
edge [out=0,in=180]       node {$\phi$}       (A)
(C) edge [out=-160,in=-20]     node {$\xi$}        (A);
\end{tikzpicture}
\end{equation*}
with ideal generated by relations
\begin{equation*}
\begin{array}{ccccc}
\label{eq:quiverADHMenh}
\alpha\beta - \beta\alpha + \xi\eta, & \alpha\phi -\phi\alpha', & \beta\phi-\phi\beta', &\eta\phi, & \alpha'\beta'-\beta'\alpha'.
\end{array}
\end{equation*}
Then a representation of the quiver above is given by $X=(A,B,I,J,A',B',F)$ such that $A,$ $B\in End(V)$, $I\in Hom(W,V)$, $J\in Hom(V,W)$, $A'$, $B'\in End(V')$ and $F\in Hom(V',V)$, see the diagram below,
\begin{equation*}
\begin{tikzpicture}[->,>=stealth',shorten >=1pt,auto,node distance=2.5cm,
semithick]
\tikzstyle{every state}=[fill=white,draw=none,text=black]

\node[state]    (A)                {$V$};
\node[state]    (B) [left of=A]    {$V'$};
\node[state]    (C) [right of=A]   {$W$};

\path (A) edge [loop above]     node {$A$}     (A)
edge [loop below]     node {$B$}     (A)
edge [out=20,in=160]      node {$J$}     (C)
%edge [bend left]      node {$\gamma$}     (B)
(B) edge [loop above]     node {$A'$}    (B)
edge [loop below]     node {$B'$}    (B)
edge [out=0, in=180]      node {$F$}     (A)
(C) edge [out=-160,in=-20]      node {$I$}     (A);
\end{tikzpicture}
\end{equation*}
satisfying the equations
\begin{equation}
\begin{array}{ccc}	\label{eq:repquiverADHMenh}
~~ [A,B]+IJ = 0, & JF=0, & ~~ \\ 
~~ [A',B']=0, & AF-FA'=0, & BF-FB'=0,
\end{array}
\end{equation}
which will be also called \textit{enhanced ADHM equations} in this work. This representation is stable if satisfies
	\begin{enumerate}
	\item[$(S.1)$] $F\in Hom(V',V)$ is injective;
	\item[$(S.2)$] The ADHM data $\mathcal{A}=(W,V,A,B,I,J)$ is stable, i.e., there is no proper subspace $0\subset S\subsetneq V$ preserved by $A$, $B$ and containing the image of $I$.
\end{enumerate}

This is a reasonable stability condition since it was proved in \cite{vFJ}[Lemma 4] that these conditions are equivalent to the $\Theta$-semistability condition defined by King in \cite{A01-01-20}. This proof is completely analogous to the proof given by Bruzzo, et al. in \cite{A01-01}[Lemma 3.1]. Then, by using Geometric Invariant Theory techniques, by analogy with \cite{A01-01-20} and \cite[Section 3.2]{A01-01} one can construct the moduli space of framed stable representations of the enhanced ADHM quiver $\caln(r,c,c^{\prime})$, where $\dim(W)=r$, $\dim(V)=c$ and $\dim(V^{\prime})=c^{\prime}$. A detailed construction for this moduli space can be found in \cite{vFJ}[Section 4].

Since it is proved in \cite{vFJ} that $\calf(1,n,l)$ coincides with the nested Hilbert scheme ${\rm Hilb}^{n,n+l}(\C^2)$ of points in $\C^2$, for $l=1$ we have $\calf(1,n,1) =  {\rm Hilb}^{n,n+1}(\C^2) $ which is smooth. Furthermore $\calf(1,n,1)$ admits the structure of a holomorphic pre-symplectic manifold, that is, $\calf(1,n,1)$ is a K\"ahler manifold equipped with a natural closed holomorphic 2-form $\Omega$ (see \cite{vFJ}). 

Considering only the smooth moduli space, i.e., $\caln(1,c,1)$ (see \cite{vFJ}[Section 5]), its tangent space is given by the quotient 
\begin{equation}\label{eq:tangent_space}
T_{X}\caln(1,c,1) = \dfrac{ker(d_1)}{im(d_0)},
\end{equation}

where
\begin{equation}
\label{eq:complexo-cr-c'}
\xymatrix{%\mathcal{C}(X): 
	& \txt{$End(V)$\\ $\oplus$ \\ $End(V')$} \ar[r]^-{d_0} 
	& \txt{$End(V)^{\oplus^2}$\\ $\oplus$ \\ $Hom(W,V)$\\ $\oplus$ \\ $Hom(V,W)$\\ $\oplus$ \\ $End(V')^{\oplus^2}$\\ $\oplus$ \\ $Hom (V',V)$}  \ar[r]^-{d_1}
	& \txt{$End(V)$\\ $\oplus$ \\ $Hom(V',V)^{\oplus^2}$\\ $\oplus$ \\ $Hom(V',W)$\\ $\oplus$ \\ $End(V')$}% \ar[r]^-{d_2}
%	& \txt{$Hom(V',V)$}
}
\end{equation}
is given by \begin{equation*}
\begin{array}{rcl}
d_0(h,h')                & = & ([h,A], [h,B], hI, -Jh, [h',A'], [h',B'], hF-Fh')\\
d_1(a,b,i,j,a',b',f)     & = & ([a,B] + [A,b] + Ij + iJ, Af+aF-Fa'-fA', \\
&   & Bf+bF-Fb'-fB', jF +Jf, [a',B']+[A',b']).
\end{array}
\end{equation*}

%%%%%%%%%%%%%%%%%%%%%%%%%%%%%%%%%%%%%%%%%%%%%%%%%%%%%%%%%%%%%%
%%%%%%%%%%%%%%%%%%%  geometric structures  %%%%%%%%%%%%%%%%%%% 
%%%%%%%%%%%%%%%%%%%%%%%%%%%%%%%%%%%%%%%%%%%%%%%%%%%%%%%%%%%%%%

\section{Geometric structures on $\mathcal{N}(1,3,1)$} \label{geometry}

The goal of this section is to study geometric structures on the moduli space of framed flags of sheaves, motivated by the fact that the moduli space of framed torsion free sheaves on $\p2$ is known to be a hyperk\"ahler manifold. We will fix dimension vector $(1,3,1)$ in order to study the degenerate points of the holomorphic pre-symplectic form $\Omega$. The pre-hyperk\"ahler structure was defined in \cite{vFJ}[Section 7] and the construction of this structure is presented for $\caln(r,c,1)$ for the sake of completeness. 

Recall that a \emph{hyperk\"ahler manifold} is a Riemannian manifold $(M,g)$ equipped with three parallel complex structures $(\Gamma_1,\Gamma_2,\Gamma_3)$ satisfying the usual quaternionic relations; in addition, each 2-form $\omega_k(\cdot,\cdot):=g(\Gamma_k\cdot,\cdot)$ is a K\"ahler form for the K\"ahler manifold $(M,g,\Gamma_k)$. One can then define a symplectic form $\Omega:=\omega_2+i\omega_3$, which is holomorphic with respect to the complex structure $\Gamma_1$; the triple $(M,\Gamma_1,\Omega)$ is called the \emph{holomorphic symplectic manifold} associated with the hyperk\"ahler ma\-ni\-fold $(M,g,\Gamma_1,\Gamma_2,\Gamma_3)$.

\begin{dfn}
A \emph{pre-hyperk\"ahler manifold} is a K\"ahler ma\-ni\-fold $(M,g,\Gamma)$ equipped with a pair of closed 2-forms $(\omega_1,\omega_2)$ satisfying
\begin{equation}\label{pre-hk}
\omega_2(\cdot,\cdot) = \omega_3(\cdot,\Gamma\cdot) .
\end{equation}
\end{dfn}

Given a pre-hyperk\"ahler manifold $(M,g,\Gamma,\omega_2,\omega_3)$, one can define the closed 2-form $\Omega:=\omega_2+i\omega_3$; condition (\ref{pre-hk}) implies that
$$ \Omega(\cdot,\Gamma\cdot) = i\Omega(\cdot,\cdot) $$
hence $\Omega$ is holomorphic with respect to $\Gamma$. This observation motivates the following definition.

\begin{dfn}
A \emph{holomorphic pre-symplectic manifold} is a triple $(M,\Gamma,\Omega)$ consisting of a complex manifold $(M,\Gamma)$ equipped with a holomorphic pre-symplectic structure $\Omega$.
\end{dfn}

The holomorphic pre-symplectic manifold $(M,\Gamma,\Omega)$ decribed in the paragraph before the previous definition is called the holomorphic pre-symplectic manifold associated with the pre-hyperk\"ahler manifold $(M,g,\Gamma,\omega_2,\omega_3)$. Note that $\Omega$ is non-degenerate if and only if both $\omega_2$ and $\omega_3$ are non-degenerate.

It was proved in \cite{vFJ}[Section 7.1]  that $\calf(1,n,1)$ admits the structure of a pre-hyperk\"ahler manifold; this is done by embedding it into a hyperk\"ahler manifold.

%-------------------------------------------

%-------------------------------------------

\subsection{The pre-hyperk\"ahler structure on $\caln(1,c,1)$}

\indent In this section, one can find the consequences of the fact that the moduli space $\mathcal{N}(1,c,1)$ is a subvariety of the hyperk\"ahler manifold $\mathcal{W}(1,c,1)=(\mathcal{W}(1,c,1),\langle\mbox{ , }\rangle, \Gamma_1,\Gamma_2,\Gamma_3)$. It was proved in \cite{vFJ}[Section 7.1] that this is true for the general case $\mathcal{N}(r,c,c')$. However, here it is fixed the moduli space of framed stable representations of the ADHM quiver of numerical type $(1,c,1)$, because this is the only case in which the variety is smooth. First, note that there exists the inclusion map
\begin{equation*}
\begin{tikzcd}
\mathcal{N}(1,c,1)\arrow[hookrightarrow]{r}{\iota} & (\mathcal{W}(1,c,1),\langle\mbox{ , }\rangle, \Gamma_1,\Gamma_2,\Gamma_3).
\end{tikzcd}\end{equation*}

Hence, associated with this inclusion, there exists a complex structure on $\mathcal{N}(1,c,1)$ inherited by the pull-back, $\iota^*\Gamma_1$, and a closed degenerate $2$-form $\Omega=\iota^*\omega_2+\sqrt{-1}\iota^*\omega_3$. Indeed, let $(a,b,i,j,a',b',f,0)\in\mathcal{N}(1,c,1)$. Thus,
\begin{align*}
\iota^*\Gamma_1(a,b,i,j,a',b',f,0) =& \Gamma_1(\iota_*a,\iota_*b,\iota_*i,\iota_*j,\iota_*a',\iota_*b',\iota_*f,0)\nonumber \\
=& (\sqrt{-1}a,\sqrt{-1}b,\sqrt{-1}i,\sqrt{-1}j,\sqrt{-1}a',\sqrt{-1}b',\sqrt{-1}f,0)
\end{align*}
is clearly a complex structure on $\mathcal{N}$. Moreover, let $x_1=(a_1,b_1,i_1,j_1,a'_1,b_1',f_1,0)$ and $x_2=(a_2,b_2,i_2,j_2,a'_2,b_2',f_2,0)$ in $\mathcal{N}$. It is easy to check that $(\mathcal{N}, \iota^*\langle\mbox{ , }\rangle, \iota^*\Gamma_1 )$ has a K\"ahler structure. The $2$-form $\Omega$ is given by
\begin{align*}
\Omega(x_1,x_2) =& (\iota^*\omega_2+\sqrt{-1}\iota^*\omega_3)(x_1,x_2)  \nonumber\\
=& (\omega_2+\sqrt{-1}\omega_3)(\iota_*x_1,\iota_*x_2) \nonumber \\ 
%=& (\omega_2+\sqrt{-1}\omega_3)((a_1,b_1,i_1,j_1,a'_1,b_1',f_1,0),(a_2,b_2,i_2,j_2,a'_2,b_2',f_2,0))\nonumber \\
%=& -\frac{1}{2}tr(a_2b_1 - b_2a_1 + b_1^{\dagger}a_2^{\dagger}-a_1^{\dagger}b_2^{\dagger} + i_2j_1-j_2i_1+j_1^{\dagger}i_2^{\dagger}-i_1^{\dagger}j_2^{\dagger}\nonumber \\
%&+b_1^{\prime\dagger}a_2^{\prime\dagger}-a_1^{\prime\dagger}b_2^{\prime\dagger} +a'_2b'_1 - b_2'a'_1)-\nonumber \\
%&-\frac{(\sqrt{-1})^{2}}{2}tr(-a_2b_1 + b_2a_1 + b_1^{\dagger}a_2^{\dagger}-a_1^{\dagger}b_2^{\dagger} - i_2j_1+j_2i_1+j_1^{\dagger}i_2^{\dagger}-i_1^{\dagger}j_2^{\dagger}\nonumber \\
%&+b_1^{\prime\dagger}a_2^{\prime\dagger}-a_1^{\prime\dagger}b_2^{\prime\dagger} -a'_2b'_1 + b_2'a'_1)\nonumber \\
%=& \frac{1}{2}tr(-a_2b_1 + b_2a_1 - b_1^{\dagger}a_2^{\dagger}+a_1^{\dagger}b_2^{\dagger} - i_2j_1 + j_2i_1 - j_1^{\dagger}i_2^{\dagger} + i_1^{\dagger}j_2^{\dagger}\nonumber \\
%& -b_1^{\prime\dagger}a_2^{\prime\dagger}+a_1^{\prime\dagger}b_2^{\prime\dagger} -a'_2b'_1 + b_2'a'_1 +b_1^{\prime\dagger}a_2^{\prime\dagger}-a_1^{\prime\dagger}b_2^{\prime\dagger} -a'_2b'_1 + b_2'a'_1 -\nonumber \\
%& -a_2b_1 + b_2a_1 + b_1^{\dagger}a_2^{\dagger}-a_1^{\dagger}b_2^{\dagger} - i_2j_1+j_2i_1+j_1^{\dagger}i_2^{\dagger}-i_1^{\dagger}j_2^{\dagger})\nonumber \\
&= tr(-a_2b_1+b_2a_1-i_2j_1+i_1j_2-a'_2b'_1+b'_2a'_1).
\end{align*} 

Note that by taking $u=(0,0,0,0,0,0,f,0)\in T\mathcal{N}$, $\Omega_{X}(u,v)\equiv 0$ for all $v\in T\mathcal{N}$, i.e., $\Omega$ is in fact a degenerate $2$-form. Also, it is easy to check that the $2$-forms $\iota^*\omega_2$ and $\iota^*\omega_3$ satisfy
\begin{equation*}
\left\{\begin{array}{lll}
\iota^*\omega_2(u,v) & = & \iota^*\omega_3(u,\Gamma_1v)\\
\iota^*\omega_3(u,v) & = & -\iota^*\omega_2(u,\Gamma_1v)
\end{array}\right. .
\end{equation*}
In other words, $\mathcal{N}(1,c,1)$ admits the structure of a pre-hyperk\"ahler manifold.

%-------------------------------------------

\subsection{Degenerate points of the holomorphic pre-symplectic form on $\caln(1,3,1)$}

We consider now the case, $c=3$ to precisely determine the degeneration locus of the 
closed holomorphic $2$-form $\Omega$ defined above, that is for which points $X\in\mathcal{N}(1,3,1)$ the linear map
\begin{equation*} \begin{array}{lcl}
T_X\mathcal{N}(1,3,1) & \longrightarrow & \left( T_X\mathcal{N}(1,3,1) \right)^* \\
             u        & \longmapsto     & \Omega_X(u,\cdot)
\end{array}
\end{equation*}
fails to be an isomorphism.

First, we need to prove the following auxiliary Lemma.

\begin{lem}\label{lemma-auxiliar}
Let $X=(W,V,V',A,B,I,J,A',B',F,G)$ be a framed stable representation of the enhanced ADHM quiver of numerical type $(1,3,1)$. Thus, there exists a change of basis for $V$ such that
\begin{itemize}
\item[(i)] $A=\begin{bmatrix}A' & 0 & 0\\ 0 & A_2 & 0\\ 0 & 0 & A_3 \end{bmatrix}$, $B=\begin{bmatrix}B' & 0 & 0 \\ 0 & B_2 & 0 \\ 0 & 0 & B_3\end{bmatrix}$, $F=\begin{bmatrix} 1\\ 0 \\ 0\end{bmatrix}$, if $A$ and $B$ are diagonalizable;
\item[(ii)] If $A$ or $B$ are not diagonalizable, we have the 3 cases below to analyze. We will consider $B$ not diagonalizable to fix ideias.
\item[(ii.1)] $A=\begin{bmatrix}A' & A_{12} & A_{13} \\ 0 & A' & A_{12} \\ 0 & 0 & A\end{bmatrix}$, 
$B=\begin{bmatrix}B' & 1 & 0 \\ 0 & B' & 0 \\ 0 & 0 & B_3\end{bmatrix}$, $F=\begin{bmatrix} 1\\ 0 \\ 0 \end{bmatrix}$;
\item[(ii.2)] $A=\begin{bmatrix}A' & A_{12} & A_{13} \\ 0 & A' & 0 \\ 0 & 0 & A\end{bmatrix}$, 
$B=\begin{bmatrix}B' & 1 & 0 \\ 0 & B' & 1 \\ 0 & 0 & B'\end{bmatrix}$, $F=\begin{bmatrix} 1\\ 0 \\ 0 \end{bmatrix}$;
\item[(ii.3)] $A=\begin{bmatrix}A' & 0 & 0 \\ 0 & A_2 & A_{23} \\ 0 & 0 & A_2\end{bmatrix}$, 
$B=\begin{bmatrix}B' & 0 & 0 \\ 0 & B_2 & 1 \\ 0 & 0 & B_2\end{bmatrix}$, $F=\begin{bmatrix} 1\\ 0 \\ 0 \end{bmatrix}$;
\end{itemize}
\end{lem}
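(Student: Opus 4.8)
The plan is to treat the statement as a simultaneous normal-form result for the pair $(A,B)$ together with the vector $F$, extracting all the structure from the enhanced ADHM equations \eqref{eq:repquiverADHMenh} and the stability conditions $(S.1)$--$(S.2)$. I would begin by recording the consequences of $\dim V'=1$: the blocks $A',B'$ are scalars, and the relations $AF-FA'=0$, $BF-FB'=0$ say exactly that $F(1)\in V$ is a common eigenvector of $A$ and $B$ with eigenvalues $A'$ and $B'$. By $(S.1)$ the map $F$ is injective, so $F(1)\neq 0$ and I may choose a basis of $V$ whose first vector is $e_1:=F(1)$; this puts $F$ in the form $(1,0,0)^{\top}$ and forces the first columns of $A$ and $B$ to be $(A',0,0)^{\top}$ and $(B',0,0)^{\top}$. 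The side data used throughout is then: $JF=0$ makes the first entry of $J$ vanish; taking the trace of $[A,B]+IJ=0$ gives $\operatorname{tr}(JI)=0$, hence $JI=0$ since $\dim W=1$; and $[A,B]=-IJ$ is therefore a matrix of rank at most $1$ with vanishing first column. I would also note $I\neq 0$ (otherwise the smallest $A,B$-invariant subspace containing $\operatorname{im}(I)$ would be $0\neq V$, against $(S.2)$), and that interchanging $A\leftrightarrow B$ together with $J\mapsto -J$ preserves \eqref{eq:repquiverADHMenh} and $(S.1)$--$(S.2)$, which legitimizes the reduction ``assume $B$ is not diagonalizable'' in part (ii).

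Next I would dispose of case (i). Here the content is that individual diagonalizability of $A$ and $B$, \emph{together with stability}, forces them to commute and in fact to be simultaneously diagonalizable. The mechanism is the rank-$1$ defect: for an eigenvector $v$ of $A$ with $Av=\lambda v$ one computes $(A-\lambda)(Bv)=-(Jv)\,I$, so $B$ preserves each eigenspace of $A$ modulo the single line $\C I$. Using this together with $JI=0$ and cyclicity of $I$, I would show that the off-diagonal ``leakage'' must vanish, since otherwise the cyclic subspace generated by $I$ is trapped in a proper $A,B$-invariant subspace; this is exactly what excludes the non-commuting diagonalizable configurations. Once $[A,B]=0$, two commuting diagonalizable operators are simultaneously diagonalizable, and cyclicity of $I$ forces every joint eigenspace to be one-dimensional with $I$ having a nonzero component on each, so $V$ splits into three distinct joint eigenlines. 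Since $F$ is one of these joint eigenvectors, ordering it first yields the fully diagonal form (i).

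For case (ii) I would run the analysis through the Jordan form of the non-diagonalizable operator $B$, keeping $F$ as a distinguished eigenvector. A non-diagonalizable $3\times 3$ matrix having $e_1=F$ as an eigenvector falls into exactly three Jordan types, and these produce the three displayed shapes of $B$: a $2\times 2$ block at the eigenvalue $B'$ of $F$ together with a $1\times 1$ block (ii.1), a single $3\times 3$ block at $B'$ (ii.2), or a $1\times 1$ block at $B'$ carrying $F$ together with a $2\times 2$ block at a distinct eigenvalue $B_2$ (ii.3). With $B$ fixed in each form, I would determine $A$ from three inputs: its first column is $(A',0,0)^{\top}$ by the first step; the bracket $[A,B]=-IJ$ must have rank at most $1$, which after a direct computation confines the non-commuting entries of $A$ to a single column; and cyclicity of $I$ discards the degenerate sub-configurations. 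The residual freedom, namely the changes of basis fixing the chosen normal form of $B$ (the centralizer of $B$), is then spent to normalize the surviving entries of $A$, producing the Toeplitz-type relations such as the repeated super-diagonal entry $A_{12}$ in (ii.1).

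The routine parts are the eigenvector and Jordan theory for the single matrix $B$ and the linear computation of $[A,B]$ in each shape. The genuine difficulty, and the step I expect to be most delicate, is the systematic use of the stability condition $(S.2)$: it is precisely cyclicity of $I$ that forces commutation in case (i) and pins down the exact off-diagonal entries of $A$ in case (ii), and the same condition must be invoked to rule out the spurious normal forms that the rank-$1$ bracket condition alone would otherwise permit. Tracking where $I$ sits relative to the generalized eigenspaces of $B$, and how iterated products in $A$ and $B$ sweep out all of $V$, is the crux on which the entire classification rests.
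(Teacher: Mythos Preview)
Your outline has the right skeleton---pin $e_1=F(1)$ as the first basis vector using $(S.1)$ and the relations $AF=FA'$, $BF=FB'$, then classify $(A,B)$ by the Jordan type of the non-diagonalizable factor---and this is indeed the shape of the argument the paper points to (the paper's own proof is simply a reference to \cite{vFJ}[Lemma~26] together with equations~\eqref{eq:repquiverADHMenh}). But you have missed the one simplification that makes the computation short, and the paper states it explicitly at the beginning of the proof of Proposition~\ref{prop28}: for $r=\dim W=1$, stability forces $J=0$ (this is \cite[Proposition~2.8]{nakajima}). Hence $[A,B]=0$ \emph{unconditionally}, and the enhanced ADHM equations collapse to $[A,B]=0$, $AF=FA'$, $BF=FB'$.

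Once you know this, almost everything you describe as delicate evaporates. There is no ``rank-$1$ defect'' and no ``leakage'' to suppress; case~(i) does not require any cyclicity argument to force commutation, because $A$ and $B$ already commute and two commuting diagonalizable operators are simultaneously diagonalizable. In case~(ii) the form of $A$ is read off directly from the centralizer of the Jordan form of $B$ together with the constraint $Ae_1=A'e_1$; the Toeplitz pattern in (ii.1)--(ii.2) is exactly the centralizer of a Jordan block, not a consequence of a rank bound on $[A,B]$. Stability $(S.2)$ still enters, but only to guarantee the genericity needed for the listed shapes to be exhaustive (e.g.\ distinctness of eigenvalue pairs in (i), nonvanishing of the relevant components of $I$), not as the mechanism producing commutation. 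Your plan would presumably still reach the answer, since rank~$\le 1$ includes rank~$0$, but it carries a layer of analysis that the single observation $J=0$ removes entirely.
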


The proof of this lemma is analogous to the one of \cite{vFJ}[Lemma 26], as one can check by using equations \eqref{eq:repquiverADHMenh}.
 
 We are finally in a position to prove the main result of this section.

\begin{pps}\label{prop28}
Let $\mathcal{N}(1,3,1)$ be the moduli space of framed stable representations of the enhanced ADHM quiver of numerical type $(1,3,1)$. Fix a framed stable representation $X=(A,B,I,J,A',B',F)$. Then the $2$-form $\Omega_X$ defined on $T_X\mathcal{N}(1,3,1)$ is non-degenerate if and only if there is a change of basis for $V$ such that the matrices associated with the endomorphisms $A$ and $B$ are diagonalizable or $B$ is not diagonalizable and its Jordan normal form is given by $B=\begin{bmatrix}B' & 0 & 0 \\ 0 & B_2 & 1 \\ 0 & 0 & B_2\end{bmatrix}$. 
\end{pps}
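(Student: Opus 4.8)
The plan is to compute the rank of the presentation matrix of $\Omega_X$ directly in each of the normal forms supplied by Lemma~\ref{lemma-auxiliar}, since non-degeneracy of $\Omega_X$ is equivalent to this matrix being invertible on the quotient space $T_X\caln(1,3,1)=\ker(d_1)/\im(d_0)$. First I would fix, in each case (i) and (ii.1)--(ii.3), an explicit basis for the tangent space: starting from the ambient deformation vector $(a,b,i,j,a',b',f)$, I would impose the linearized enhanced ADHM equations that cut out $\ker(d_1)$, and then quotient by the infinitesimal gauge action $\im(d_0)$ given by $(h,h')\mapsto([h,A],[h,B],hI,-Jh,[h',A'],[h',B'],hF-Fh')$. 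Because the blocks of $A$ and $B$ are already in the triangular/diagonal shape dictated by Lemma~\ref{lemma-auxiliar}, these linear constraints decouple block-by-block and can be solved to produce a concrete basis of representatives, on which the bilinear form
\[
\Omega_X(x_1,x_2)=\operatorname{tr}\!\bigl(-a_2b_1+b_2a_1-i_2j_1+i_1j_2-a_2'b_1'+b_2'a_1'\bigr)
\]
becomes an explicit antisymmetric matrix.

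With that matrix in hand, the argument splits into a positive and a negative direction. For the diagonalizable case (i) and for case (ii.3), I would show the presentation matrix has full rank by exhibiting it (after a mild reordering of the basis) in a near block-anti-diagonal form whose determinant is manifestly a nonzero product of the surviving pairings between the $(a,b)$ and the $(a',b')$/framing data; here the key is that the degenerate direction $u=(0,0,0,0,0,0,f)$ already quotiented away in the prehyperkähler discussion is the \emph{only} source of degeneracy, and in these two cases that direction is killed inside $\ker(d_1)/\im(d_0)$. For the remaining cases (ii.1) and (ii.2) I would instead produce an explicit nonzero tangent vector $u\in T_X\caln(1,3,1)$, built from the off-diagonal Jordan entries $A_{12},A_{13}$ (respectively the stacked Jordan structure of $B$), satisfying $\Omega_X(u,v)=0$ for every $v$; exhibiting one such null vector certifies that $\Omega_X$ is degenerate and hence that these Jordan types are excluded.

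The main obstacle I expect is bookkeeping the quotient by $\im(d_0)$ correctly: the gauge directions are not all independent once $A,B$ share eigenvalues or Jordan blocks with $A',B'$ through the intertwining relations $AF=FA'$, $BF=FB'$, so one must be careful not to over- or under-count the dimension of $T_X\caln(1,3,1)$ (which the introduction pins at $2rn+r+1=2\cdot 3+1+1=8$ for $r=1,n=3$). Concretely, the stability condition (S.2) forces the framing vector $F=\begin{bmatrix}1&0&0\end{bmatrix}^{t}$ to be cyclic, and I would use this to eliminate the $h$-gauge freedom and trim the representatives to exactly eight parameters before evaluating the form. Once the tangent space is pinned down to the right dimension in each normal form, checking (non)degeneracy of the resulting $8\times 8$ antisymmetric matrix is a finite, if tedious, determinant/null-vector computation, and comparing the four cases yields precisely the dichotomy stated: non-degeneracy holds exactly when $A,B$ are diagonalizable or when $B$ has the single Jordan block of type (ii.3).
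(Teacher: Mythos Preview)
Your overall strategy---reduce to the normal forms of Lemma~\ref{lemma-auxiliar}, write out $\ker(d_1)$ explicitly in each case, and then test whether the null space of $\Omega_X$ on $\ker(d_1)$ is exactly $\im(d_0)$---is the same as the paper's. The paper carries out case~(i) in detail by computing $\Omega_X(u_1,u_2)$ on $\ker(d_1)$, showing that $\Omega_X(u_1,\cdot)\equiv 0$ forces $u_1$ into a specific shape, and then exhibiting an explicit $(H,h')$ with $d_0(H,h')=u_1$; the remaining cases are declared analogous. Your variant of first choosing representatives and then computing a finite antisymmetric matrix is linear-algebraically equivalent, just with the quotient taken up front rather than at the end.

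Two concrete slips to fix before you execute the plan. First, the dimension count is off: $\caln(1,3,1)\cong\calf(1,2,1)$ (the dimension vector $(r,c,c')$ corresponds to $\calf(r,c-c',c')$), so the formula $2rn+r+1$ applies with $n=2$, giving $\dim T_X\caln(1,3,1)=6$, not $8$. You will be looking at a $6\times 6$ antisymmetric matrix, and if you try to trim to eight independent parameters you will either over-count $\ker(d_1)$ or under-quotient by $\im(d_0)$. Second, the vector $F=\begin{bmatrix}1&0&0\end{bmatrix}^t$ is not the framing and is not cyclic; $F$ is the injection $V'\hookrightarrow V$ coming from condition~(S.1). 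Cyclicity is a property of $I\in\Hom(W,V)$ via condition~(S.2), and it is $I$ (together with $A,B$) that lets you kill the $h$-gauge freedom. Once these are corrected the computation goes through as you describe.
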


\begin{proof}
According to Lemma \ref{lemma-auxiliar}, there is a change of basis for $V$ such that the matrices $A$, $B$ and $F$ are given by (i), (ii.1), (ii.2), (ii.3) and the proof consists in verifying if the holomorphic form is non-degenerated for each case. We will present the analysis only of the case (i). The other cases are analogous as one can check through tedious computations. 

Recall that if $r=1$, then the map $J\in Hom(V,W)$ must vanish, since $X$ is stable (see \cite[Proposition 2.8]{nakajima}), and recall that if $c'=1$, then $[A',B']=0$, for all $A',$ $B'\in V'$. Thus, the enhanced ADHM equations reduce to
\begin{align*}
 [A,B] =0,\quad AF-FA'=0,\quad BF-FB'=0.
\end{align*} 
 	
 	Suppose that $A$ and $B$ are diagonalizable. Thus, it follows from Lemma \ref{lemma-auxiliar} (i) that there exists a change of basis for $V$ such that
 	%\begin{equation}
 	%A=	\begin{bmatrix}
 	%		A' & 0 \\ 
 	%		0 & a_2
 	%	\end{bmatrix},\quad 
 	%B=	\begin{bmatrix}
 	%	B' & 0 \\
 	%	 0 & b_2
 	%	 \end{bmatrix},\quad 
 	%F=	\begin{bmatrix} 
 	%		1\\ 
 	%		0
 	%	\end{bmatrix}
 	%\end{equation} 
 	%\begin{equation}
 	%
 	%F =\left[\begin{array}{c}
 	%1\\
 	%0
 	%\end{array}\right].
 	%\end{equation}
 $$A=\begin{bmatrix}A' & 0 & 0\\ 0 & A_2 & 0\\ 0 & 0 & A_3 \end{bmatrix}, \quad B=\begin{bmatrix}B' & 0 & 0 \\ 0 & B_2 & 0 \\ 0 & 0 & B_3\end{bmatrix}, \quad F=\begin{bmatrix} 1\\ 0 \\ 0\end{bmatrix}.$$ 
 	%A=\left[\begin{array}{cc}
 	% \lambda_a & 1\\
 	%0 & \lambda_a\end{array}\right], \quad B=\left[\begin{array}{cc}
 	% \lambda_b & 1\\
 	%0 & \lambda_b\end{array}\right],
 	In order for $X=(A,B,I,J,A',B',F)$ to be a stable representation of the enhanced ADHM quiver, it is easy to check that
 	\begin{equation}
 	\label{eq:X-n-diag-03}
 	I =\left[\begin{array}{c}
 	\mu\\
 	\lambda\\
 	1
 	\end{array}\right].
 	\end{equation}
 	\indent Now, consider $v\in T_X\mathcal{N}$ given by $v=(a,b,i,j,a',b',f)$. Then, it follows from \eqref{eq:tangent_space} that $v$ satisfies
 	\begin{align*}
 	j=0,\quad [a,B] + [A,b]=0,\quad fA'+Fa'-aF-Af=0,\quad fB'+Fb'-bF-Bf=0.
 	\end{align*}
 	Then, denoting 
 	\begin{equation*}
 	a=\left[\begin{array}{ccc}
 	a_{11} & a_{12} & a_{13}\\
 	a_{21} & a_{22} & a_{23}\\
 	a_{31} & a_{32} & a_{33} \end{array}\right],\quad b=\left[\begin{array}{ccc}
 	b_{11} & b_{12} & b_{13}\\
 	b_{21} & b_{22} & b_{23}\\
 	b_{31} & b_{32} & b_{33} \end{array}\right], \quad i=\left[\begin{array}{c}
 	i_{1} \\
 	i_{2}\\
 	i_{3}\end{array}\right], \quad f=\left[\begin{array}{c}
 	f_{1} \\
 	f_{2} \\
 	f_{3}\end{array}\right]
 	\end{equation*}
 	one gets from the equations above that
 	 $v=(a,b,i,j,a',b',f)\in T_X\mathcal{N}$ is such that
 	\begin{align*}
 	a=\left[\begin{array}{ccc}
a' & a_{12} & a_{13}\\
(A-A_2)f_2 & a_{22} & a_{23}\\
(A-A_3)f_3 & a_{32} & a_{33} \end{array}\right],\quad b=\left[\begin{array}{ccc}
b & b_{12} & b_{13}\\
(B-B_2)f_2 & b_{22} & b_{23}\\
(B-B_3)f_3 & b_{32} & b_{33} \end{array}\right], \quad i=\left[\begin{array}{c}
i_{1} \\
i_{2}\\
i_{3}\end{array}\right], \quad f=\left[\begin{array}{c}
f_{1} \\
f_{2} \\
f_{3}\end{array}\right]
 	\end{align*}
 	and satisfies 
 	\begin{align}
 	\left(A-A_2\right) b_{12} = a_{12} \left(B-B_2\right) \label{eq:diag1} \\
 	 \left(A-A_3\right) b_{13} = a_{13} \left(B-B_3\right) \label{eq:diag2}\\
\left(A_2-A_3\right) b_{23} = a_{23} \left(B_2-B_3\right) \label{eq:diag3}\\
\left(A_2-A_3\right) b_{32} = a_{32} \left(B_2-B_3\right)\label{eq:diag4}  
\end{align}
 	\indent Thus, for $u_1=\left(a_1,b_1,i_1,j_1,a_1',b_1',f_1\right)$ $u_2=\left(a_2,b_2,i_2,j_2,a_2',b_2',f_2\right)$, such that
 	\begin{align*}
a_1= \left(
\begin{array}{ccc}
	a_1' & a1_{12} & a1_{13} \\
	\left(A-A_2\right) f2_2 & a1_{22} & a1_{23} \\
	\left(A-A_3\right)f2_3 & a1_{32} & a1_{33} \\
\end{array}
\right)\\
b_1= \left(
\begin{array}{ccc}
	b_1' & b1_{12} & b1_{13} \\
	\left(B-B_2\right) f2_2 & b1_{22} & b1_{23} \\
	\left(B-B_3\right) f2_3 & b1_{32} & b1_{33} \\
\end{array}
\right)\\
a_2= \left(
\begin{array}{ccc}
	a_2' & a2_{12} & a2_{13} \\
	\left(A-A_2\right) f2_2 & a2_{22} & a2_{23} \\
	\left(A-A_3\right)f2_3 & a2_{32} & a2_{33} \\
\end{array}
\right)\\
b_2=\left(
\begin{array}{ccc}
	b_2' & b2_{12} & b2_{13} \\
	\left(B-B_2\right) f2_2 & b2_{22} & b2_{23} \\
	\left(B-B_3\right) f2_3 & b2_{32} & b2_{33} \\
\end{array}
\right)\\
f1=\left(
\begin{array}{c}
	f1_1 \\
	f1_2 \\
	f1_3 \\
\end{array}
\right) 
f2=\left(
\begin{array}{c}
	f2_1 \\
	f2_2 \\
	f2_3 \\
\end{array}
\right), 
\end{align*}
the holomorphic pre-symplectic for $\Omega_X(u_1,u_2)$ is given by
\begin{align}
\Omega_X(u_1,u_2) & =  tr(-a_2b_1+b_2a_1)-a_2'b_1'+b_2'a_1' \label{eq:form diag}\\
                  & = -2 a_2' b_1'+2 a_1' b_2'-a2_{22} b1_{22}-a2_{32} b1_{23}-a2_{23} b1_{32}-a2_{33}
                  b1_{33}+a1_{22} b2_{22}+a1_{32} b2_{23} \nonumber \\ 
                  & +a1_{23} b2_{32}+a1_{33} b2_{33}+a2_{12}
                  \left(-B+B_2\right) f1_2+\left(A-A_2\right) b2_{12} f1_2+a2_{13} \left(-B+B_3\right) f1_3 \nonumber\\
                  & +\left(A-A_3\right) b2_{13}
                  f1_3+a1_{12} \left(B-B_2\right) f2_2+\left(-A+A_2\right) b1_{12} f2_2+ \nonumber\\
                  &(a1_{13} \left(B-B_3\right) +\left(-A+A_3\right)
                  b1_{13}) f2_3 \nonumber\\
                  & = 2 a_2' b_1'+2 a_1' b_2'-a2_{22} b1_{22}-a2_{32} b1_{23}-a2_{23} b1_{32}-a2_{33}
                  b1_{33}+ \nonumber\\
           & a1_{22} b2_{22}+a1_{32} b2_{23}+a1_{23} b2_{32}+a1_{33} b2_{33}+b2_{12} \left(-A+A_2\right)
              f1_2+ \nonumber\\
         &\left(A-A_2\right) b2_{12} f1_2+b2_{13} \left(-A+A_3\right) f1_3+\left(A-A_3\right) b2_{13} f1_3+b1_{12}
          \left(A-A_2\right) f2_2+ \nonumber\\
&  \left(-A+A_2\right) b1_{12} f2_2+b1_{13} \left(A-A_3\right) f2_3+\left(-A+A_3\right) b1_{13} f2_3 \nonumber \\
& = 2 a_2' b_1'+2 a_1' b_2'-a2_{22} b1_{22}-a2_{32} b1_{23}-a2_{23} b1_{32}-a2_{33}
b1_{33}+a1_{22} b2_{22}+a1_{32} \nonumber \\
& b2_{23}+a1_{23} b2_{32}+a1_{33} b2_{33} \nonumber
\end{align}
where in the third equality we used equations \eqref{eq:diag1}, \eqref{eq:diag2}, \eqref{eq:diag3}, \eqref{eq:diag4}. 
 	Suppose that $\Omega_{X}(u_1,u_2)=0$, for all $u_2$ on the tangent space. In particular, taking 
 	 $u_1=\left(a_1,b_1,i_1,j_1,0,0,f_1\right)$ such that
 	\begin{align*}
 	a_1= \left(
 	\begin{array}{ccc}
 	0 & a1_{12} & a1_{13} \\
 	\left(A-A_2\right) f2_2 & 0 & 0 \\
 	\left(A-A_3\right)f2_3 & 0 & a1_{33} \\
 	\end{array}
 	\right)\\
 	b_1= \left(
 	\begin{array}{ccc}
 	0 & b1_{12} & b1_{13} \\
 	\left(B-B_2\right) f2_2 & 0 & 0 \\
 	\left(B-B_3\right) f2_3 & 0 & 0 \\
 	\end{array}
 	\right)\\
 	f1=\left(
 	\begin{array}{c}
 	f1_1 \\
 	f1_2 \\
 	f1_3 \\
 	\end{array}
 	\right), 
 	\end{align*}
 	
 	it follows from equation \eqref{eq:form diag} that $$\Omega_{X}(u_1,u_2) = a1_{33}b2_{33} = 0,$$ which means that $a1_{33} = 0$. Analogously one can check that if $\Omega_{X}(u_1,u_2)=0$, for all $u_2$ on the tangent space,  $u_1=\left(a_1,b_1,i_1,j_1,a_1',b_1',f_1\right)$ must be such that
 	\begin{align*}
 	a_1= \left(
 	\begin{array}{ccc}
 	0 & a1_{12} & a1_{13} \\
 	\left(A-A_2\right) f2_2 & 0 & 0 \\
 	\left(A-A_3\right)f2_3 & 0 & 0 \\
 	\end{array}
 	\right)\\
 	b_1= \left(
 	\begin{array}{ccc}
 	0 & b1_{12} & b1_{13} \\
 	\left(B-B_2\right) f2_2 & 0 & 0 \\
 	\left(B-B_3\right) f2_3 & 0 & 0 \\
 	\end{array}
 	\right)\\
 	f1=\left(
 	\begin{array}{c}
 	f1_1 \\
 	f1_2 \\
 	f1_3 \\
 	\end{array}
 	\right),\quad 
 	a_1'= b_1' = 0. 
 	\end{align*}
 	Denote by $[u_1]$ the equivalence class of $u_1$. In order to conclude this case, we must prove that $[u]=[0]$ i.e., $u\in Im(d_0)$, where
 	\begin{equation*}
 	\begin{array}{lcll}
 	do:& End(V)\oplus End(V') & \longrightarrow & \mathbb{X}\\
 	& (h,h')               & \longmapsto     & ([h,A], [h,B], hI, -Jh, 0, 0, hF-Fh')
 	\end{array}.
 	\end{equation*}
 	
 	However, by means of tedious computation, one can check that for 
 \begin{align*}
 	H = \left(
 	\begin{array}{ccc}
 		\frac{x-H_{12} i_2-H_{13} i_3}{i_1} & H_{12} & H_{13} \\
 		f1_2 & \frac{y-f1_2 i_1}{i_2} & 0 \\
 		f1_3 & 0 & \frac{y-f1_2 i_1}{i_2} \\
 	\end{array}
 	\right), \quad h=\frac{x-f1_1 i_1-H_{12} i_2-H_{13} i_3}{i_1}
\end{align*} 
$d_0(H,h) = u_1$, concluding the proof.	
 \end{proof}

\end{document}